\newtheorem{theorem}{Theorem}
\newtheorem{corollary}[theorem]{Corollary}
\theoremstyle{definition}
\newtheorem{example}{Example}[subsection]
\theoremstyle{remark}
\newcommand{\mse}{\mathsf{e}}
\newcommand{\mso}{\mathsf{o}}
\newcommand{\mseb}{\bar{\mathsf{e}}}
\newcommand{\msob}{\bar{\mathsf{o}}}
\newcommand{\mcob}{\bar{\mathcal{O}}}
\newcommand{\mceb}{\bar{\mathcal{E}}}
\newcommand{\mce}{\mathcal{E}}
\newcommand{\mco}{\mathcal{O}}
\title{Virtual Knot Groups}
\author{Heather A. Dye}
\author{Aaron Kaestner}
\begin{document}

\begin{abstract}For a knot diagram $K$, the classical knot group $\pi_1(K)$ is a free group modulo relations determined by Wirtinger-type relations on the classical crossings. The classical knot group is invariant under the Reidemeister moves. In this  paper, we define a set of quotient groups associated to a knot diagram $K$. These quotient groups are invariant under the Reidemeister moves and the set includes the extended knot groups defined by Boden et al  and Silver and Williams.  \end{abstract}
\maketitle
\section{Introduction}

An \textit{oriented virtual knot diagram} is a decorated immersion of an oriented $S^1$ into the plane with two types of crossings. Classical crossings are indicated by over/under markings and virtual crossings are circled. An example of a virtual knot diagram is shown in figure \ref{fig:labeledvknot}. 

\begin{figure}
\[ \scalebox{0.75}{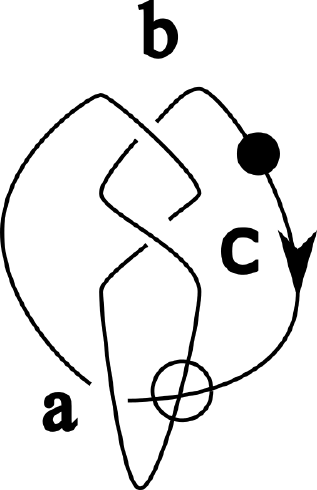} \]
\caption{Labeled, oriented knot}
\label{fig:labeledvknot}
\end{figure}

An \textit{oriented virtual knot} is an equivalence class of virtual knot diagrams determined by  oriented Reidemeister moves (figures \ref{fig:or1move}, \ref{fig:or2move} and \ref{fig:or3move}) and an oriented detour move as in figure \ref{fig:detour}. An oriented detour moves deletes an oriented arc of the knot diagram that contains only  virtual crossings and redraws the arc in a new location preserving orientation. Any new double points are virtual crossings. 
\begin{figure}
\[ \begin{array}{c} \scalebox{0.5}{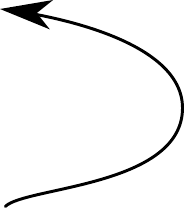} \end{array} \leftrightarrow
\begin{array}{c} \scalebox{0.5}{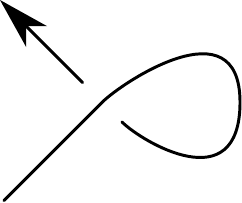} \end{array}
\leftrightarrow
\begin{array}{c} \scalebox{0.5}{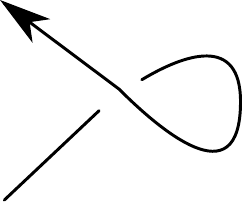} \end{array}
 \]
\caption{Oriented Reidemeister I} 
\label{fig:or1move}
\end{figure} 

\begin{figure}
\begin{subfigure}{0.48\linewidth}
\[ \begin{array}{c} \scalebox{0.5}{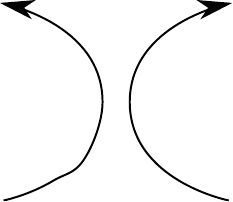} \end{array} \leftrightarrow
\begin{array}{c} \scalebox{0.5}{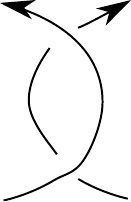} \end{array} \]
\caption{Oriented Reidemeister II a}
\label{fig:or2movea}
\end{subfigure} 
\begin{subfigure}{0.48\linewidth}
\[ \begin{array}{c} \scalebox{0.5}{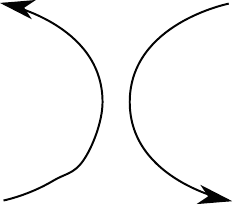} \end{array} \leftrightarrow
\begin{array}{c} \scalebox{0.5}{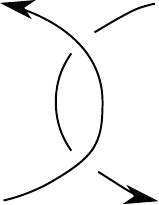} \end{array} \]
\caption{Contra-oriented Reidemeister II b}
\label{fig:or2moveb}
\end{subfigure}
\caption{Oriented Reidemeister II}
\label{fig:or2move}
\end{figure}

\begin{figure}
\[ \begin{array}{c} \scalebox{0.5}{\input{diagrams/or3alhs.pdf_tex}} \end{array} \leftrightarrow
\begin{array}{c} \scalebox{0.5}{ \input{diagrams/or3arhs.pdf_tex}} \end{array} \]
\caption{Oriented Reidemeister III}
\label{fig:or3move}
\end{figure}

\begin{figure}
\[ \begin{array}{c} \scalebox{0.5}{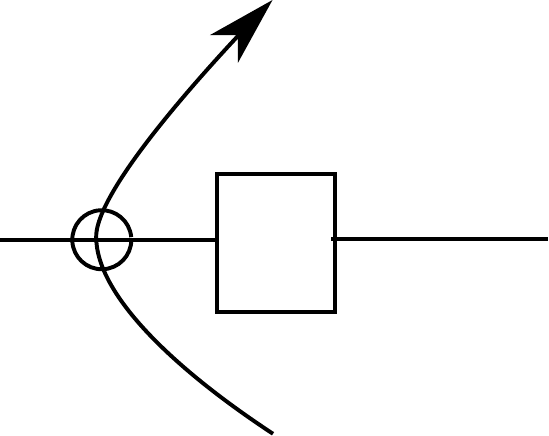} \end{array} \leftrightarrow
\begin{array}{c} \scalebox{0.5}{ 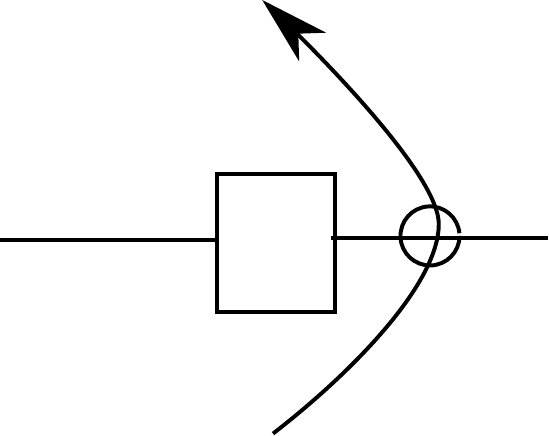} \end{array} \]
\caption{Oriented detour move}
\label{fig:detour}
\end{figure}

With orientation,  a minimal  generating set of Reidemeister moves consists of two Reidemeister I moves (figure \ref{fig:or1move}), two Reidemeister II moves (figure \ref{fig:or2move}) and a single Reidemeister III move (figure \ref{fig:or3move}) as shown by Polyak \cite{polyak} and the oriented detour move (\ref{fig:detour}).
The oriented detour move can be decomposed into oriented versions of the virtual Reidemeister moves shown in figure \ref{fig:vrmoves}. The necessary orientations are analogous to the classical orientations.

\begin{figure}
\begin{subfigure}{0.49\linewidth}
\[ \begin{array}{c} \scalebox{0.5}{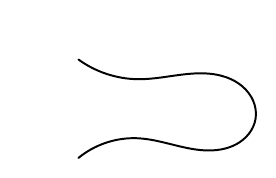} \end{array} \leftrightarrow
\begin{array}{c} \scalebox{0.5}{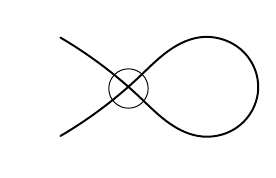} \end{array} \]
\caption{Virtual I}
\label{fig:vr1move}
\end{subfigure}
\begin{subfigure}{0.49\linewidth}
\[ \begin{array}{c} \scalebox{0.5}{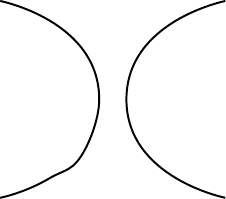} \end{array} \leftrightarrow
\begin{array}{c} \scalebox{0.5}{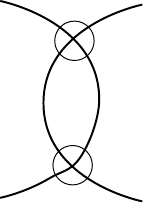} \end{array} \]
\caption{Virtual II}
\label{fig:vr2move}
\end{subfigure} \\
\begin{subfigure}{0.49\linewidth}
\[ \begin{array}{c} \scalebox{0.5}{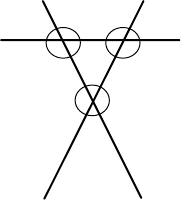} \end{array} \leftrightarrow
\begin{array}{c} \scalebox{0.5}{ 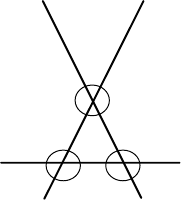} \end{array} \]
\caption{Virtual III}
\label{fig:vr3move}
\end{subfigure}
\begin{subfigure}{0.49\linewidth}
\[ \begin{array}{c} \scalebox{0.5}{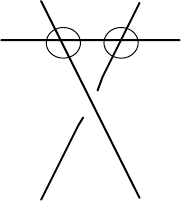} \end{array} \leftrightarrow
\begin{array}{c} \scalebox{0.5}{ 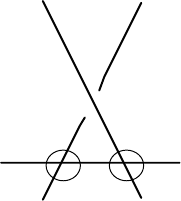} \end{array} \]
\caption{Virtual IV}
\label{fig:vr4move}
\end{subfigure}
\caption{Virtual Reidemeister moves}
\label{fig:vrmoves}
\end{figure}
The Gauss code of a knot diagram records the order of the crossings. 
To find the Gauss code of the diagram, we select a base point and traverse the knot in the direction of orientation and record the label of each crossing as it is traversed. In figure \ref{fig:labeledvknot}, the Gauss code is
\[ abcacb. \]
The \textit{parity of a classical crossing} in the knot is determined by whether  there are an even or odd number of labels between the two occurences of the crossing label in the Gauss code \cite{introvkt}, \cite{dkencyc}. The crossing has \textit{even parity} if there is an even number of terms between the two labels. Otherwise, the crossing has \textit{odd parity}.  In figure \ref{fig:labeledvknot}, the crossing $a$ has even parity  and the crossings $b$ and $c$ are oddly intersticed and have odd parity.

In a diagram with no virtual crossings, all classical crossings have even parity.
The parity of a crossing is unchanged by Reidemeister moves and virtual Reidemeister moves that do not act on that crossing. Only Reidemeister I and II moves introduce or remove new classical crossings. A Reidemeister I move introduces a single crossing with even parity. The Reidemeister II move  introduces either two odd crossings or two even crossings with opposite signs. The Reidemeister III move does not change the parity of any crossing in the diagram.

For more information about parity, refer to works on parity by Manturov et al \cite{ManturovIlyutkoNikonov}.

\subsection{Knot Groups} \label{knotgroups}

In virtual knot theory, researchers have considered a variety of different groups associated with virtual knots \cite{bodengroup}. These groups are quotients of a free group obtained by associating a generator with each arc or semi-arc of diagram and a set of Wirtinger-type relations based on the crossings.

\begin{figure}
\begin{subfigure}{0.32\linewidth} 
\[ \scalebox{0.5}{ 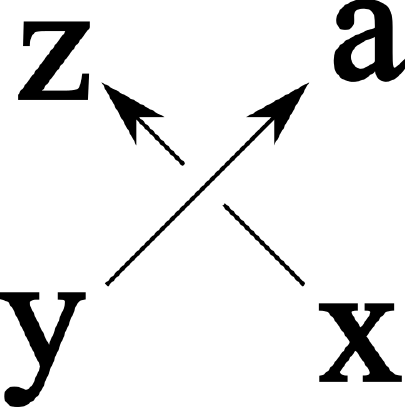}  \]
\caption{Positive crossing}
\label{fig:positivecrossing}
\end{subfigure}
\begin{subfigure}{0.32\linewidth} 
\[ \scalebox{0.5}{ 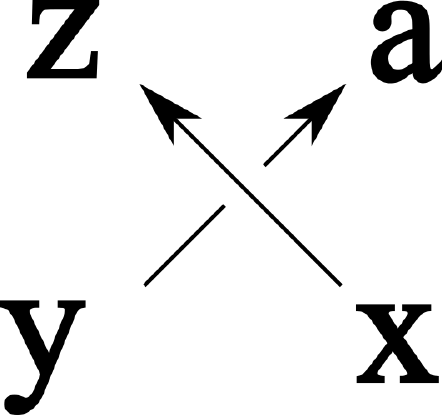}  \]
\caption{Negative crossing}
\label{fig:negativecrossing}
\end{subfigure}
\begin{subfigure}{0.32\linewidth} 
\[ \scalebox{0.5}{ 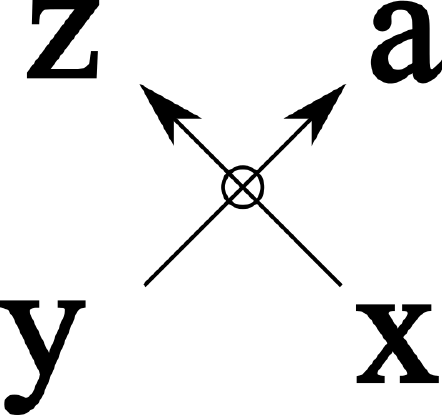}  \]
\caption{Labeled virtual crossing}
\label{fig:virtualcrossing}
\end{subfigure}
\caption{Labeled signed crossings}
\label{fig:labeledcrossings}
\end{figure}

In  the paper,\textit{Virtual knot groups and almost classical knots} \cite{bodengroup}, the virtual knot group, $VG(K)$, is defined using this method. Positive crossings (as labeled in figure \ref{fig:labeledcrossings}) define the 
relations: $z=y^{-1} x s y s^{-1}$ and $a=sys^{-1}$.  Virtual crossings define the relations
$a = qyq^{-1}$ and $z=qxq^{-1}$.  Specializations of this group define many of the groups studied in virtual knot theory.
The quandle group, $QG(K)$, is obtained by letting $s=1$ \cite{introvkt}. If $q=s$, the welded group, $WG(K)$, (defined by Fenn and Rourke \cite{fennrourke}) is obtained. The extended group $EG(K)$  \cite{silverwilliamsgroup} is obtained by letting $q=1$. Finally, the fundamental group of a classical knot is obtained by 
letting $s=q=1$.

Previously, the authors of this paper incorporated parity information in the knot group structure in \textit{Virtual Parity Alexander Polynomial} \cite{dyekaestner}. This resulted in a knot group where even positive crossings determined the relations $z=y^{-1} x s y s^{-1}$ and $a=sys^{-1}$.  The odd crossings give rise to the relations
$a = \theta y \theta^{-1}$ and $z=\theta x \theta^{-1}$. The virtual crossings define the relations
$a = qyq^{-1}$ and $z=qxq^{-1}$. This quotient group contains parity information, but at the cost of the information about the sign of the odd crossings. 

In section \ref{autgroups}, we define a set of quotient groups associated to a virtual knot. In section \ref{structure}, we discuss the structure of this set of knot groups and its relationship to previously studied knot groups.   In section \ref{paritygroups}, we use this structure to determine if signed parity information can be incorporated into the group structure.  

\section{Knot groups defined via elements of $Aut(F_{2n+j})$} \label{autgroups}
Given a virtual knot diagram $K$, we construct two different quotients of the free group $F_{2n+j}$ that are invariant under the oriented Reidemeister moves and the detour move. 

Let $n$ be the number of classical crossings in $K$ and choose $j \geq 0$. Then $K$
 has $2n$ semi-arcs that begin and end at classical crossings. Each semi-arc is associated with one of the generators $a_1, a_2, \ldots , a_{2n}$ of $F_{2n+j}$. Then choose two automorphisms of 
$F_{2n+j}$, $\phi$ and $\theta$. We  quotient $F_{2n+j}$ by the  commutator relations determined by $\theta$ and $\phi$ and relations determined by the set of classical crossings. In the second group structure, we let $n$ equal the number of classical and virtual crossings. We then choose three automorphisms of the free group $F_{2n+j}$. We quotient $F_{2n+j}$ by commutator relations defined by the automorphisms and relations determined by the classical and virtual crossings. 

Within this set of quotient groups, we recover 
the knot groups described in \cite{bodengroup}. Within the set of quotient groups defined, we are able to obtain a variety of algebraic structures that have been motivated by knot diagrams and the Reidemeister moves.

\begin{theorem}\label{novirts}
Let $K$ be an virtual oriented knot diagram with $n$ classical crossings. Let $j \geq 0$ be an integer and let $ \phi$ and $ \theta $ be automorphisms of $F_{2n+j}$. 
The quotient group $(K, F_{2n+j}, \theta, \phi) $ is determined by the set of $2n$ relations determined by the classical crossings and the automorphisms as follows. (See figure \ref{fig:labeledcrossings}  for crossing labels.)
 From positive crossings, we obtain the relations
\begin{align*} 
z&= y \theta(x a^{-1}), & a= \phi (y).
\end{align*}
From negative crossings, 
\begin{align*} 
z&= \phi^{-1} (x), & a = \theta^{-1} (z^{-1} y ) x .
\end{align*}
We denote these relations as $r_1, r_2, \ldots r_{2n}$. 
Let 
$ R= \lbrace r_1, r_2, \ldots r_{2n} \rbrace \cup \lbrace \phi(\theta(a)) = \theta(\phi(a)) : a \in F_{2n+j}\rbrace .$
Then 
$(K, F_{2n+j}, \phi, \theta) = F_{2n+j}/R $ is  invariant under the oriented Reidemeister moves and detour move.
\end{theorem}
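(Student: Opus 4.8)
The plan is to verify invariance one local move at a time. By Polyak's theorem on a minimal generating set for the oriented Reidemeister moves \cite{polyak}, together with the decomposition of the oriented detour move into oriented versions of the virtual Reidemeister moves of figure~\ref{fig:vrmoves}, it suffices to show that $F_{2n+j}/R$ is unchanged up to isomorphism under the two oriented Reidemeister~I moves, the parallel and anti-parallel oriented Reidemeister~II moves, the single oriented Reidemeister~III move, and the four virtual Reidemeister moves. Throughout, the commutator relations $\{\phi(\theta(a))=\theta(\phi(a)):a\in F_{2n+j}\}$ are diagram-independent, so I only need to track the effect of a move on the Wirtinger-type relations $r_i$, with one caveat: a move that creates or destroys classical crossings also changes the rank of the ambient free group, so each such move must be accompanied by a prescription for extending $\phi$ and $\theta$ to the larger free group (agreeing with the old automorphisms on the old generators); I would make this prescription explicit in each case and check it is consistent.

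I would begin with the virtual moves and the detour move, which are the easiest. A semi-arc runs between consecutive classical crossings, a generator is assigned once per semi-arc, and virtual crossings contribute no relations; hence a virtual Reidemeister move induces a bijection between the semi-arcs of the two diagrams that respects incidence at every classical crossing, carrying the generating set and the relation set $R$ to the corresponding data for the new diagram verbatim. The two presentations are therefore literally identical after relabeling.

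Next I would treat Reidemeister~I and~II. A Reidemeister~I move adds one crossing, hence two semi-arcs, two free generators $u,v$, and two new relations; the point is that since $\phi$ and $\theta$ are automorphisms the new crossing relations can be solved for the "outgoing" generators, so $u$ and $v$ become words in the old generators and may be removed by Tietze transformations, leaving exactly the original presentation — one then checks the commutator relations on $u,v$ follow from those on the old generators together with the eliminated relations. The Reidemeister~II moves are the same argument with two crossings of opposite sign: four new generators, four new relations, and a Tietze reduction in which the positive-crossing pair and the negative-crossing pair cancel, the identities $\phi\phi^{-1}=\theta\theta^{-1}=\mathrm{id}$ being what makes the cancellation work; I would run the parallel and anti-parallel cases separately since the local generator assignments differ.

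The main work, and the step I expect to be the principal obstacle, is the oriented Reidemeister~III move. Here $n$ is unchanged, the three crossings are rearranged, and six semi-arcs are relabeled while the boundary semi-arcs of the local tangle keep their generators; one writes the six relations from the three crossings on each side, uses them to express the three interior generators in terms of the boundary ones, and checks that the two resulting families of relations among the boundary generators have the same normal closure. This is the classical Wirtinger-type verification, but now intermediate words must be pushed through $\phi$ and $\theta$, and it is precisely here that one needs these maps to be automorphisms (to invert them) and to commute — the commutator relations in $R$ are used to reorder the nested applications of $\phi$ and $\theta$ that arise when composing three crossing relations. I would organize this by checking a single generic oriented Reidemeister~III configuration and deducing the remaining variants from the already-established Reidemeister~II invariance, so that only one case is verified by hand.
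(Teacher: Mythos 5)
Your plan is correct and is essentially the paper's proof: both reduce the claim to Polyak's generating set of oriented Reidemeister moves plus the observation that virtual crossings never interrupt semi-arcs (so the detour move changes nothing in the presentation), and both make the single oriented Reidemeister III configuration the heart of the matter, eliminating the interior generators on each side and using the imposed commutativity $\phi\theta=\theta\phi$ to reconcile the resulting boundary relations. The only difference is bookkeeping: the paper obtains the negative-crossing relations by forcing co-oriented Reidemeister II invariance and leaves Reidemeister I and the change of rank of $F_{2n+j}$ implicit, whereas you handle these explicitly via Tietze transformations and a prescribed extension of $\phi$ and $\theta$.
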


\begin{proof}

Consider a labeled positive crossing as shown in figure \ref{fig:positivecrossing}
\begin{align*}
z&= y \theta(x a^{-1}), & a= \phi (y).
\end{align*}
Using the co-oriented Reidemeister II move (figure \ref{fig:or2movea}), we calculate that
negative crossings must define
\begin{align*}
z&= \phi^{-1} (x), & a = \theta^{-1} (z^{-1} y ) x .
\end{align*}
The contra-oriented Reidemeister II move (figure \ref{fig:or2moveb}) is also invariant under the relations.

We prove invariance under the oriented Reidemeister move III as shown in figure \ref{fig:R3a}. 

\begin{figure}
\[ \begin{array}{c} \scalebox{0.5}{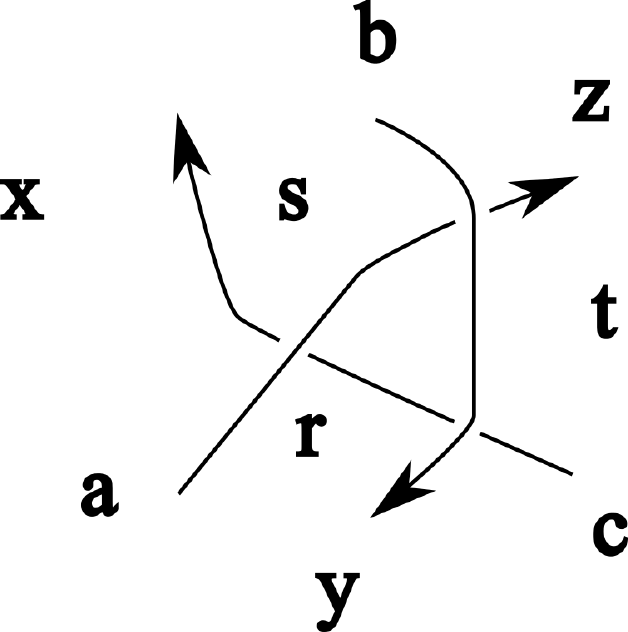} \end{array} \leftrightarrow \begin{array}{c} \scalebox{0.5}{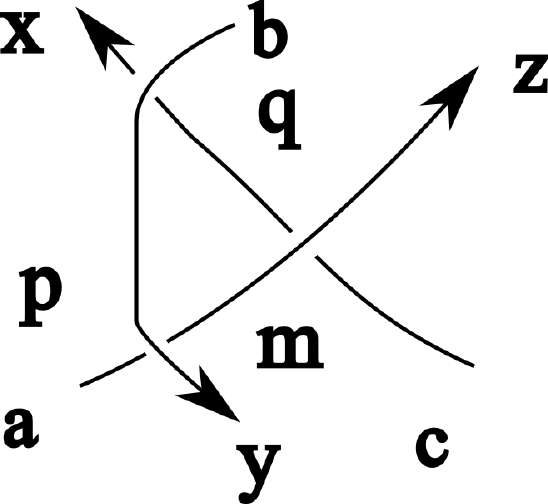} \end{array} \]
\caption{Labeled Reidemeister III move} 
\label{fig:R3a}
\end{figure} 

From the left hand side in figure \ref{fig:R3a}, we obtain:
\begin{align*}
x&=a \theta (r s^{-1}), & s &= \phi (a),  \\  
z& = b \theta (s t^{-1} ), & t &= \phi (b), \\  
r&= \theta^{-1} ( y^{-1} c) t, & y &= \phi^{-1} (t) .  
\end{align*}
These equations reduce to 
\begin{align} \label{eqn3alhs}
x & = a b ^{-1} c \theta ( \phi ( b a^{-1})), & y&=b, & z &=b \theta ( \phi (ab^{-1})).
\end{align} 

From the right hand side, we obtain: 
\begin{align*} 
m &= p \theta (a y^{-1}), & \phi (p) &=y \\
q&= m \theta (c z^{-1}), & \phi (m) &= z \\  
x &= \theta^{-1} (p^{-1} q) b,  & \phi^{-1} (b) &= p  
\end{align*} 
This set of equations reduces to
\begin{align} \label{eqn3arhs}
x & = a b ^{-1} c \phi ( \theta ( b a^{-1})), & y&=b, & z &=b \phi ( \theta (ab^{-1})).
\end{align}
Comparing equations \ref{eqn3alhs} and \ref{eqn3arhs}, 
\begin{align*} \label{eqn3alhs} 
a b ^{-1} c \theta ( \phi ( b a^{-1})) &=  a b ^{-1} c \phi ( \theta ( b a^{-1})), & b \theta ( \phi (ab^{-1}))&= b \phi ( \theta (ab^{-1})).
\end{align*}
Since the automorphisms $\phi$ and $\theta$ commute, then the relations are invariant under the generating set of the oriented Reidmeister moves.
\end{proof}

Next, we incorporate the virtual crossings into the structure of the quotient group. In this corollary, the endpoints of a semi-arc are at any crossing, regardless of type.

\begin{corollary}\label{withvirts} Let $K$ be an oriented knot diagram with $n$ total crossings (both classical and virtual). Choose $j \geq 0$, an integer. 
Let $\eta, \phi $, and $ \theta$  be automorphisms of $F_{2n+j}$.  The quotient group $(K, F_{2n+j},\theta, \phi, \eta)$
is obtained by quotienting $F_{2n+j}$ by relations obtained from crossings and automorphisms. 
 For positive crossings
\begin{align*}
z&= y \theta(x a^{-1}), & a= \phi (y).
\end{align*}
For negative crossings, 
\begin{align*}
z&= \phi^{-1} (x), & a = \theta^{-1} (z^{-1} y ) x .
\end{align*}
For virtual crossings,  
\begin{align*} a&= \eta(y), & z&= \eta^{-1} (x).
\end{align*}  
See figure \ref{fig:labeledcrossings} for crossing labels.
We obtain the set of relations:
$$ R = \lbrace r_1, r_2, \ldots r_{2n} \rbrace \cup \lbrace
\phi (\theta (a) ) = \theta (\phi (a)),  \phi ( \eta (a) = \eta ( \phi (a)),  \eta ( \theta (a)) = \theta ( \eta (a)) : a \in F_{2n+j} \rbrace
$$

Then the quotient group $ (K, F_{2n+j}, \eta, \theta, \phi) = F_{2n+j}/R$ is  invariant under the oriented Reidemeister moves and detour move.
\end{corollary}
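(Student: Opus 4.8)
The plan is to reduce the statement to Theorem \ref{novirts} plus one local check per move in a generating set that actually involves virtual crossings. Since the oriented detour move decomposes into the oriented virtual Reidemeister moves of figure \ref{fig:vrmoves} --- the virtual moves I, II, III and the mixed move of figure \ref{fig:vr4move}, in which a classical crossing is pushed across a virtual crossing --- and since the relations contributed by a classical crossing are exactly those of Theorem \ref{novirts}, it suffices to verify invariance under (i) the oriented classical Reidemeister I, II, III moves, (ii) the virtual Reidemeister I, II, III moves, and (iii) the mixed move. For (i) the local disk of a classical move contains no virtual crossings, so the semi-arcs and relations appearing there are precisely the ones analyzed in the proof of Theorem \ref{novirts}; that argument carries over verbatim, and in particular it again forces $\phi(\theta(a))=\theta(\phi(a))$, which is one of the relations we have imposed.

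Next I would dispose of the purely virtual moves, which are easy because each uses only the single automorphism $\eta$. For virtual Reidemeister II the two crossings contribute mutually inverse identifications --- one applies $\eta^{\pm 1}$, the other $\eta^{\mp 1}$ --- so the net relation is trivial, and the two orientation variants behave identically. For virtual Reidemeister I the two semi-arcs meeting at the lone virtual crossing are joined into a loop, and substituting that identification into $a=\eta(y)$, $z=\eta^{-1}(x)$ yields exactly the equality of the outgoing arc with the incoming arc, matching the right-hand side; only the bijectivity of $\eta$ is used, so no relation is gained or lost. For virtual Reidemeister III one computes the three outgoing arcs on each side as words in $\eta^{\pm 1}$ applied to the three incoming arcs; since all three crossings use the same $\eta$, the two words coincide with no commutation required. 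This is the direct analogue of the reduction leading to equations \ref{eqn3alhs} and \ref{eqn3arhs}, but strictly simpler.

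The substantive step --- and the one I expect to be the main obstacle --- is the mixed move of figure \ref{fig:vr4move}, where a classical crossing slides past a virtual crossing. Here one labels all semi-arcs on both sides and, using the classical relations ($z=y\theta(xa^{-1})$, $a=\phi(y)$ in the positive case, and the negative-crossing relations in the negative case) together with the virtual relations $a=\eta(y)$, $z=\eta^{-1}(x)$, reduces each side to expressions for the outgoing arcs in terms of the incoming ones. Comparing the two sides, they agree if and only if $\eta$ commutes with $\phi$ and with $\theta$, i.e. $\phi(\eta(a))=\eta(\phi(a))$ and $\eta(\theta(a))=\theta(\eta(a))$ for all $a\in F_{2n+j}$ --- precisely the remaining relations adjoined to $R$. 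As in the proof of Theorem \ref{novirts}, the numerous orientation, sign, and over/under variants of this move are cut down to a few by invoking the already-established Reidemeister II relations, which trade a negative crossing for a positive one at the cost of reversing a strand's orientation. Once the mixed move is checked, invariance under the detour move follows from its decomposition, completing the proof.
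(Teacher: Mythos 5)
Your proposal is correct and follows essentially the same route as the paper: invoke Theorem \ref{novirts} for the classical moves, check the purely virtual Reidemeister I--III moves by direct computation (they involve only $\eta$), and carry out the substantive semi-arc calculation for the mixed (virtual Reidemeister IV) move, where comparing the two sides requires exactly the commutation of $\eta$ with $\theta$ and $\phi$ imposed in $R$.
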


\begin{proof}
We apply Theorem \ref{novirts} to obtain invariance under the oriented Reidemeister moves.  
For the labeled virtual crossing, we let $a= \eta(y)$ and $z= \eta^{-1} (x)$.  Direct computations verify invariance under the virtual Reidemeister I, II, and III moves. We focus on the virtual Reidemeister IV move (figure \ref{fig:R4a}). 
\begin{figure}
\[ \begin{array}{c} \scalebox{0.5}{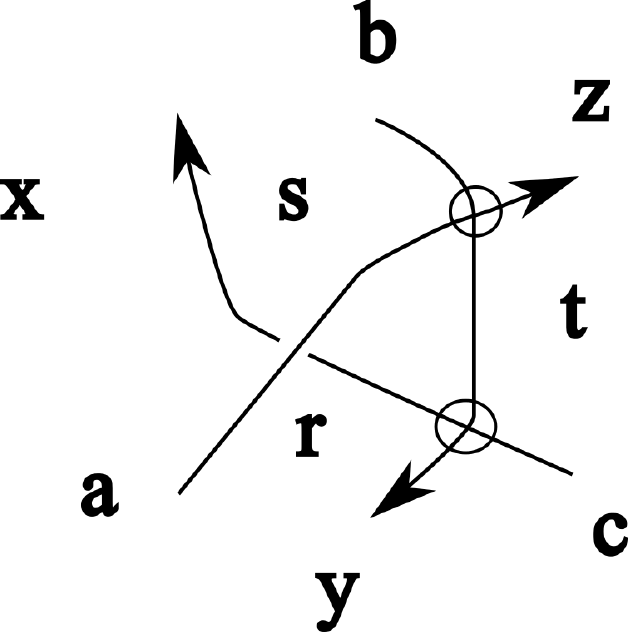} \end{array} \leftrightarrow \begin{array}{c} \scalebox{0.5}{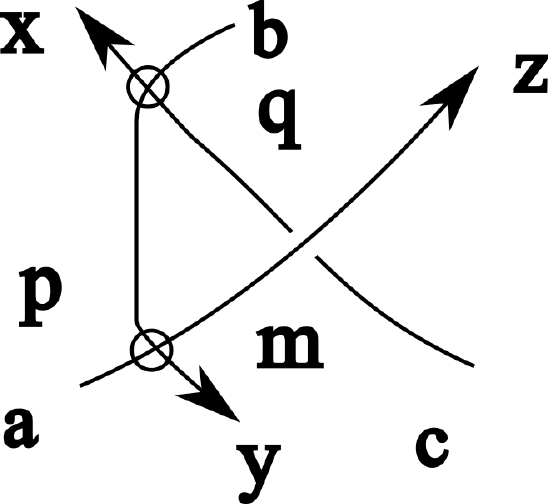} \end{array} \]
\caption{Oriented Virtual Reidemeister IV move} 
\label{fig:R4a}
\end{figure} 

From the left hand side of figure \ref{fig:R4a}, we obtain
\begin{align*} \label{eqnvr4lhs}
x&=a \theta ( r s^{-1} ), & s&= \phi (a), \\
y& = \eta^{-1} (t), & r &= \eta(c),  \\
z&= \eta^{-1} (s), & t &= \eta (b).  
\end{align*}
These equations reduce to 
\begin{align*}
x &= a  ( \theta (\eta (c) \phi (a^{-1})))  , & b&= y,  & z &=  \eta^{-1}( \phi (a)). 
\end{align*}

From the right hand side, we obtain
\begin{align*}
q &= m \theta (c z^{-1}),  & z &= \phi (m), \\
m &= \eta^{-1} (a), & y &= \eta (p). \\
p &= \eta^{-1} (b), & x &= \eta (q).
\end{align*}
These equations reduce to 
\begin{align*}
x &= a \eta ( \theta (c \phi ( \eta^{-1}( a^{-1})   ) )) ,& b&= y,   & z &= \phi ( \eta^{-1} (a)) . 
\end{align*}
Since the automorphisms $ \eta, \theta$, and $\phi$ commute then the relations define a quotient group that is invariant under the  oriented classical and virtual Reidemeister moves. 
\end{proof} 
\subsection{Examples}

We focus on specific examples of Theorem \ref{withvirts}. 
Let $K$ be a virtual knot diagram with $n$ total crossings. 
\begin{example}
Let  $ \phi = \eta = \theta = Id$ and let $j=0$. Then  $(K, F_{2n}, Id, Id, Id)$ defines a Wirtinger-type presentation group. For classical knots, this is $\pi_1 (K)$.
\end{example}

\begin{example}
Let  $ \phi = \eta = Id$, then $ (K, F_{2n}, Id, \theta, Id)$ defines a quandle for a fixed $\theta$. In this case, $z=y \theta (xy^{-1}) $ and $y=a$.   In \cite{joyce}, 
Joyce defined a quandle operation on a group $G$ by selecting and automorphism of $G$ and defining the quandle operation
$x \rhd y = s(x y^{-1})y$. Building on this definition, for a subgroup $H$ of $G$,  $G/H$ inherits a quandle structure: $Hx \rhd Hy = Hs(xy^{-1})y$. This is a homogeneous quandle (for any pair of elements, $a$ and $b$, there is a quandle homomorphism sending $a$ to $b$). 

Further, Joyce proved that
\begin{theorem}[\cite{joyce}, Theorem 7.1] Every homogenous quandle has a representation of the form $(G/H, z)$ where $z$ defines an inner automorphism. \end{theorem}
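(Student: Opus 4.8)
The plan is to realize the homogeneous quandle $Q$ inside its own automorphism group. Since $Q$ is homogeneous, its automorphism group $G := \mathrm{Aut}(Q)$ acts transitively on $Q$. Fix a basepoint $q_0 \in Q$ and let $H := \{\, g \in G : g(q_0) = q_0 \,\}$ be its stabilizer; orbit--stabilizer together with transitivity then identifies the coset space of $H$ in $G$ with $Q$ through the orbit map. The quandle axioms make the map $z := (\, x \mapsto x \rhd q_0 \,)$ an automorphism of $Q$, that is $z \in G$, and idempotence gives $z(q_0) = q_0 \rhd q_0 = q_0$, so in fact $z \in H$. I claim that under this identification $Q$ is isomorphic to the quandle structure Joyce defines on the coset space $G/H$ from the automorphism $s$ of $G$ equal to conjugation by $z$ --- an \emph{inner} automorphism --- which is the representation $(G/H, z)$ asserted by the theorem.

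The first step is to verify that $s$ fixes $H$ pointwise, the hypothesis under which Joyce's operation $Hx \rhd Hy = H s(xy^{-1}) y$ is a well-defined quandle structure; equivalently, $z$ must centralize $H$. For $h \in H$ and $x \in Q$,
\[
(hzh^{-1})(x) \;=\; h\bigl( h^{-1}(x) \rhd q_0 \bigr) \;=\; h\bigl( h^{-1}(x) \bigr) \rhd h(q_0) \;=\; x \rhd q_0 \;=\; z(x) ,
\]
where the second equality records that $h$ is a quandle automorphism and the third uses $h(q_0) = q_0$. Hence $hzh^{-1} = z$ in $G$, so $s$ is the identity on $H$ and $(G/H, s)$ is a genuine quandle.

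The second step is to show that the orbit map is a quandle isomorphism. Normalizing so that the coset of $g \in G$ corresponds to $g(q_0) \in Q$, the crux is the identity
\[
g(q_0) \rhd g'(q_0) \;=\; g'\bigl( (g')^{-1} g(q_0) \rhd q_0 \bigr) \;=\; \bigl( g'\, z\, (g')^{-1} g \bigr)(q_0) \qquad (g, g' \in G) ,
\]
which holds because $g'$ is a quandle automorphism and $z$ is the map $x \mapsto x \rhd q_0$. The right-hand side is the image under the orbit map of the coset of $g' z (g')^{-1} g$; since $z \in H$, that coset coincides with the one Joyce's operation (built from $s =$ conjugation by $z$) assigns to the cosets of $g$ and $g'$. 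Thus the orbit map intertwines the two operations, and being a bijection it yields an isomorphism $Q \cong (G/H, z)$. (Passing between this left-coset bookkeeping and the right-coset normalization $Hx \rhd Hy = H s(xy^{-1}) y$ of the statement is purely formal, amounting to applying inversion consistently.)

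The whole argument is concentrated in the centralizer identity of the second step: it is the one point where homogeneity --- which is exactly what lets $\mathrm{Aut}(Q)$ serve as a transitive group, and hence $Q$ be presented as a coset space --- meets the quandle axioms, through idempotence (to place $z$ in $H$) and the automorphism property of elements of $H$ (to push $z$ into the centralizer of $H$). Once that identity is secured, the requirement that the structural automorphism be \emph{inner} costs nothing, as it was produced as conjugation by the element $z = (\, x \mapsto x \rhd q_0 \,) \in G$; the remaining verifications are routine manipulations with cosets and the quandle operation.
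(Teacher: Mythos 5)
Your proposal is correct, and it is essentially the canonical argument for Joyce's Theorem 7.1 (the paper itself states this result only as a citation to \cite{joyce}, with no proof of its own): take $G=\mathrm{Aut}(Q)$ acting transitively by homogeneity, $H$ the stabilizer of a basepoint $q_0$, and $z\colon x\mapsto x\rhd q_0$, which lies in $H$ by idempotence and centralizes $H$ because stabilizing automorphisms intertwine with right translation by $q_0$, so that conjugation by $z$ is the required inner automorphism and the orbit map identifies $Q$ with the coset quandle. Your verification that $z$ centralizes $H$ (hence that Joyce's coset operation is well defined) and the left/right coset translation are both accurate, so there is nothing to fix.
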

and
\begin{theorem}[\cite{joyce}, Theorem 7.2 ] Every quandle is representable as $(G/H_1, H_2 \ldots, z_1, z_2, \ldots).$ \end{theorem}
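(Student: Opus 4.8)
The plan is to produce a single group $G$ acting on the whole quandle $Q$ and to realise each connected component of $Q$ as a coset space of $G$, so that $Q$ itself appears as the disjoint union $(G/H_1, G/H_2, \ldots)$ carrying distinguished elements $z_1, z_2, \ldots \in G$ --- that is, the data of Theorem~7.1 repeated component by component. First I would form the \emph{associated group} $G = \mathrm{Adj}(Q)$: one generator $e_q$ for each $q \in Q$, with relations $e_{x \rhd y} = e_y^{-1} e_x e_y$ for all $x, y \in Q$. Using the quandle axioms one checks that the assignment $q \cdot e_x := q \rhd x$ extends to a well-defined right action of $G$ on $Q$ (the relations of $G$ are respected precisely because $\rhd$ is right self-distributive and each map $y \rhd {-}$ is a bijection). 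Two identities needed downstream should be recorded here: $q \cdot e_q = q$, and $e_{q \cdot g} = g^{-1} e_q g$ for every $g \in G$ (verify the latter first for $g$ a generator, then extend multiplicatively).

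Next I would decompose $Q$ into the orbits $Q_1, Q_2, \ldots$ of this $G$-action; these are exactly the connected components of $Q$, since two elements lie in one orbit iff one is carried to the other by a finite sequence of $\rhd$'s and $\rhd^{-1}$'s. For each index $i$ fix a base point $q_i \in Q_i$, set $H_i := \mathrm{Stab}_G(q_i)$, and put $z_i := e_{q_i} \in G$. The orbit--stabiliser correspondence then gives a bijection $H_i \backslash G \to Q_i$ sending $H_i g$ to $q_i \cdot g$.

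Finally I would transport the quandle operation through these bijections. For $a \in Q_i$ and $b \in Q_j$, write $a = q_i \cdot h$ and $b = q_j \cdot g$; then $e_b = g^{-1} z_j g$ by the conjugation identity, so
\[
a \rhd b \;=\; a \cdot e_b \;=\; (q_i \cdot h)\cdot (g^{-1} z_j g) \;=\; q_i \cdot (h g^{-1} z_j g).
\]
Hence the operation reads $H_i h \rhd H_j g = H_i(h g^{-1} z_j g)$ in coset coordinates; it is therefore well defined, and being a faithful transcription of the original operation it automatically satisfies the quandle axioms. Restricted to a single component ($i = j$) this is precisely the homogeneous presentation $(G/H_i, z_i)$ of Theorem~7.1, with $z_i$ acting by an inner automorphism; the displayed formula simply glues these homogeneous pieces along the common group $G$. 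Assembling the component bijections yields a quandle isomorphism $(G/H_1, G/H_2, \ldots; z_1, z_2, \ldots) \xrightarrow{\sim} Q$, which is the asserted representation.

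I expect the only real work to be in the first step: checking that the abstractly presented group $G = \mathrm{Adj}(Q)$ genuinely acts on $Q$ --- i.e. that the relators $e_{x \rhd y} e_y^{-1} e_x^{-1} e_y$ are killed by $q \cdot e_x := q \rhd x$ --- and proving the conjugation identity $e_{q \cdot g} = g^{-1} e_q g$. Both come down to disciplined use of the right-self-distributive law, but they are the load-bearing computations. A secondary nuisance is keeping the left/right conventions for cosets and for the action aligned so that the operation formula above matches the $(G/H, z)$ convention used in Theorem~7.1 and in the Example. The remaining ingredients --- orbit decomposition, orbit--stabiliser, and transport of structure --- are formal.
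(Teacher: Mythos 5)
The paper does not actually prove this statement: it is imported verbatim from Joyce's paper as a citation, so there is no in-text proof to compare against. Your argument is correct, and it follows the standard outline of Joyce's proof (orbit decomposition of $Q$ under a group acting through the quandle symmetries, stabilizers $H_i$, base points, transport of the operation to cosets), with one genuine difference of route: Joyce works with the group of inner automorphisms of $Q$ (the operator group generated by the maps $x \mapsto x \rhd y$), which acts on $Q$ tautologically, whereas you use the abstractly presented adjoint group $\mathrm{Adj}(Q)$ and must therefore first verify that the presentation acts --- exactly the load-bearing step you flag, and your verification via right self-distributivity goes through (note that the bijectivity you need is of $x \mapsto x \rhd y$, the quandle axiom, not of ``$y \rhd -$'' as literally written). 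Since the $\mathrm{Adj}(Q)$-action factors through the inner automorphism group, both choices give the same orbits, stabilizers and coset formula; the adjoint group buys you the conjugation identity $e_{q \cdot g} = g^{-1} e_q g$ essentially as a defining relation, at the cost of the well-definedness check, while Joyce's choice gets a bona fide action for free and proves the analogous identity inside the operator group. One point worth making explicit in your write-up: $q_i \cdot e_{q_i} = q_i$ gives $z_i \in H_i$, and it is this fact that turns your formula $H_i h \rhd H_i g = H_i(h g^{-1} z_i g)$ into the paper's normal form $Hx \rhd Hy = H s(xy^{-1})y$ with $s$ the inner automorphism determined by $z_i$, so the restriction to a single component really is the $(G/H_i, z_i)$ of Theorem 7.1 and not merely equivalent to it up to convention.
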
 
\end{example} 

\begin{example}
Let  $ \theta= Id$, then $ (K, F_{2n}, Id, Id, \phi)$ defines a virtual biquandle for a fixed $\phi$. In this case, 
$z=y x \phi(y^{-1}) $ and  $a = \phi (y)$. 
We recall the virtual biquandle definition given in \cite{cranshenrichnelson}.
Let $X$ be a set and let $ \Delta : X \rightarrow X \times X$, be defined as $\Delta(x) = (x,x)$. Then let $B,V: X \times X \rightarrow X \times X$ denote invertible maps satisfying four axioms:
\begin{enumerate}
\item $V^2 = Id: X \times X \rightarrow X \times X,$
\item There exist unique invertible maps $S, vS: X \times X \rightarrow X \times X$ satisfying
\begin{equation} \nonumber
S(B_1(y,x),y) = (B_2(y,x),y) \text{ and } vS(V_1 (y,x),y)=(V_2(y,x),x)
\end{equation}
for all $x,y \in X$,
\item $(S^{\pm 1} \circ \Delta)_j$  and $(vS^{\pm 1} \circ \Delta)_j$ satisfying 
$(S \circ \Delta)_1 = (S \circ \Delta)_2$ and $(vS \circ \Delta)_1 = (vS \circ \Delta)_2$
\item $B$ and $V$ satisfy the set theoretic Yang Baxter equations
\begin{align*}
(B \times Id) (Id \times B) (B \times Id) = (Id \times B)(B \times Id) (Id \times B)\\
(V \times Id) (Id \times B) (V \times Id) = (Id \times V)(B \times Id) (Id \times V)\\
(V \times Id) (Id \times V) (V \times Id) = (Id \times V)(V \times Id) (Id \times V)
\end{align*}
\end{enumerate} 

We note that our proof of Theorem \ref{novirts} and Corollary \ref{withvirts} demonstrates axiom 4. 
We define $S$ and $vS$ as
\begin{align*}
S( a, z)=(\phi (z), z^{-1} a \phi(z)) \text{ and } vS(a,z)=(\eta (z), \eta(a)).
\end{align*}
Then, $(S \circ \Delta)_1 = (S \circ \Delta)_2$. 
Our construction satisfies all the axioms of the virtual biquandle. 
\end{example}

\begin{example}
Choose $j=3$. Consider $F_{2n+3}$ with generators
$a_1, a_2 \ldots a_{2n}, s,t,q$.  Let $\phi, \eta, \theta$ be elements of the inner automorphism group of $F_{2n+3}$ determined by $r,s,t$. We recover the extended virtual knot group, $VG(K)$ from Boden et al \cite{bodengroup} and discussed in section \ref{knotgroups}. 
\end{example} 

\section{The Structure of Theorem \ref{novirts}} \label{structure} 

We consider the set of groups defined by Theorem \ref{novirts}. In figure \ref{fig:groupstruct}, we diagram the possible algebraic structures specialized from $(K,F_{2n+j}, \theta, \phi )$. 
Let $V(K)$ denote the set of knot structures determined by  pairs of automorphisms of $F_{2n+j}$. Let $I$ denote the identity automorphism on $F_{2n+j} $. 
If $ \theta = I$, then we recover $B(K)$, a set of groups with a biquandle structures. 
If $ \phi = I$, then we recover $Q(K)$, a set of groups with quandle structures. The structures $B(K)$ and $Q(K)$ have been studied in a variety of papers. Further, letting $\phi= \theta$, we obtain $S(K)$ a set of group structures where $z= y \theta ( x \theta(y^{-1}))$ and $a= \theta (y)$. If $ \phi = \theta^{-1}$ then we obtain the set of group structures, $I(K)$ where $z = y \theta (x) y^{-1}$ and $ a= \theta^{-1} (y)$. The structures in $I(K)$ and $S(K)$ do not appear to have been studies in the literature.  Finally, letting $\theta=\phi =I$, we obtain the classical fundamental group $\pi_1 (K)$. This follows the specialization structure given in Boden et al \cite{bodengroup} on a larger scale. 
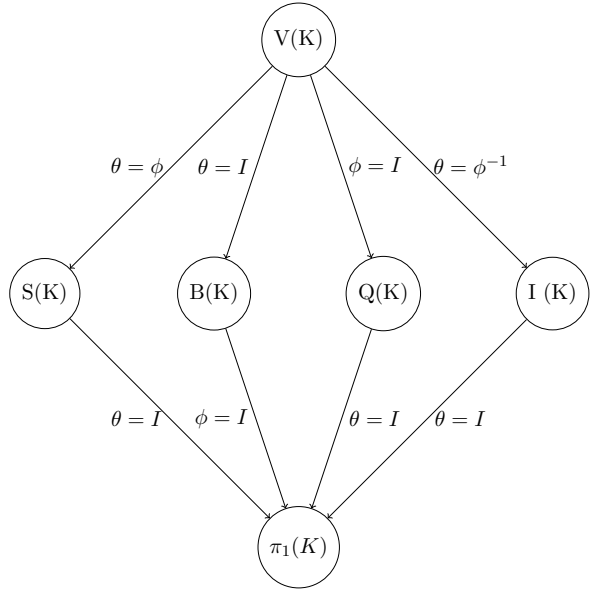
\begin{figure} \[ \begin{array}{c} \scalebox{0.75}{
\begin{tikzpicture}
    \node[shape=circle,draw=black] (S) at (0,0) {S(K)};
    \node[shape=circle,draw=black] (B) at (3,0) {B(K)};
    \node[shape=circle,draw=black] (Q) at (6,0) {Q(K)};
    \node[shape=circle,draw=black] (I) at (9,0) {I (K)};
    \node[shape=circle,draw=black] (V) at (4.5,4.5) {V(K)};
    \node[shape=circle,draw=black] (P) at (4.5,-4.5) {$\pi_1 (K)$}; 

    \draw [->] (V) edge node[left] {$\theta = I$} (B);
    \draw [->](V) edge node[left] {$\theta = \phi$}  (S);
    \path [->](V) edge node[right] {$\theta = \phi^{-1} $} (I);
    \path [->](V) edge node[right] {$\phi= I$} (Q);
    \path [->](B) edge node[left] {$ \phi = I$} (P);
    \path [->](S) edge node[left] {$ \theta = I$} (P);
    \path [->](I) edge node[right] {$ \theta = I$} (P);
    \path [->](Q) edge node[right] {$ \theta = I$} (P);
\end{tikzpicture}} \end{array} \]
\caption{Structure of $V(G)$ }
\label{fig:groupstruct} 
\end{figure}

\section{Parity Knot Groups} \label{paritygroups}
 
We use this structure to construct a knot group  that differentiates between odd and even crossings and retains sign information.
In a Reidemeister III move, there are either three even crossings or 2 odd crossings and 1 even crossing. There are three possible versions of the oriented Reidemeister III move when parity is taken into account. 
\begin{figure} 
\[ \begin{array}{c} \scalebox{0.75}{ 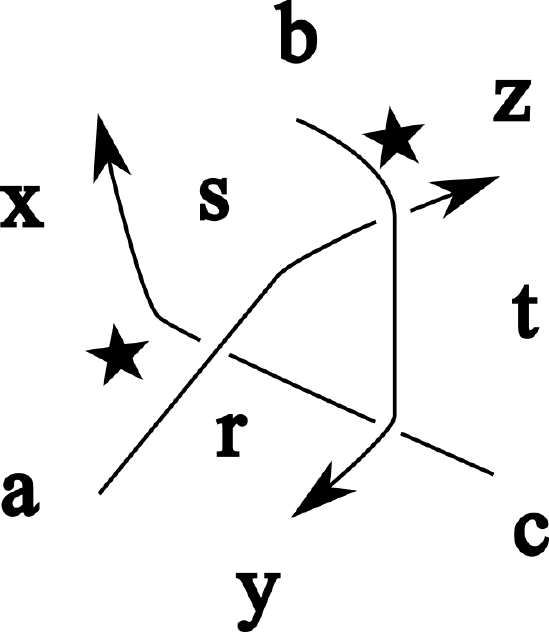} \end{array} \leftrightarrow 
\begin{array}{c} \scalebox{0.75}{ \input{diagrams/R3aRHScase1v2.pdf_tex}} \end{array}
 \]
\caption{Case 1} 
\label{fig:R3case1}
\end{figure}
In the diagrams, we have marked the odd crossings with a star.

To study the relations generally, we let $\mathcal{E}$ and $\mathcal{O}$ define $\theta$ for even and odd crossings respectively. 
We then use $\mathsf{e}$ and $\mathsf{o}$ to denote $\phi$ for even and odd crossings respectively. 
For simplicity, the bar symbol will be used to indicate inverses. For example,  $\bar{ \mathcal{O}} = \mathcal{O}^{-1} $.  We derive the relationships for each case. Note that a Reidemeister III contains only even crossings or two odd crossings and one even crossing. Hence, from the previous section,  $\mathcal{E}$ and $\mse$ commute.

In case 1, we refer to figure \ref{fig:R3case1}.
From the left hand side, we obtain:
\begin{align} \nonumber
s&=\mso (a), & x &=a \mathcal{O} (r s^{-1}), \\ \nonumber
t &= \mso (b), & z &= b \mathcal{O} (s t^{-1}), \\ \nonumber
y&= \mseb (t), &  r&= \mceb ( y^{-1} c) t.
\end{align}

From the right hand side, we obtain 
\begin{align}\nonumber
p&= \mseb (b), & x &= \mceb (p^{-1} q) b, \\  \nonumber
z&= \mso (m), & q &= m \mathcal{O} (c z^{-1}), \\  \nonumber
y&= \mso(p), & m&= p \mathcal{O} (a y^{-1}) .
\end{align}

This reduces to 
\begin{align} \label{eqn:c1x}
x &= a \mathcal{O} [ \mceb ( \mseb ( \mso (b^{-1}))    c)    \mso(ba^{-1})]
= \mceb \left[ \mathcal{O} [ a \mso( \mseb (b^{-1})   ) c  \mso \left[    \mathcal{O} ( \mso ( \mseb (b)) a^{-1} ) \mseb (b^{-1} )   \right]  \right] b , \\ \label{eqn:c1y}
y&= \mseb ( \mso (b))   = \mso( \mseb (b)) , \\ \label{eqn:c1z} 
z&= b \mathcal{O} ( \mso (a) \mso (b^{-1})) =    \mso \left(  \mseb (b) \mathcal{O} ( a \mso( \mseb (b)) ) \right)  .
\end{align}

\begin{figure} 
\[ \begin{array}{c} \scalebox{0.75}{ 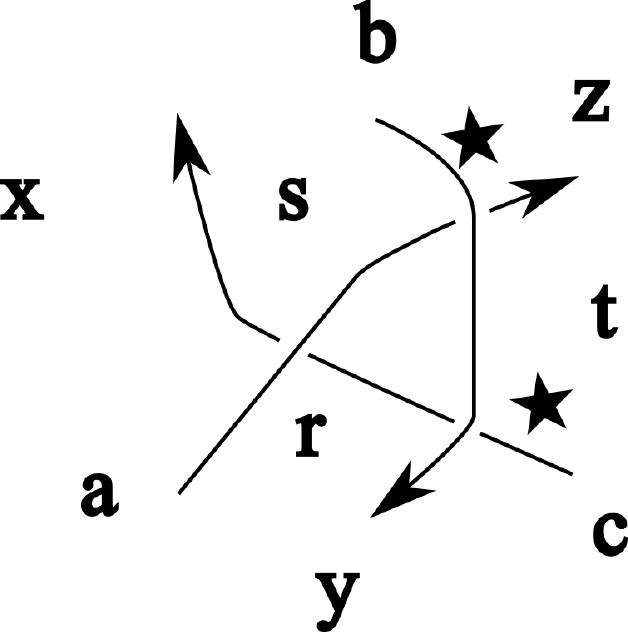} \end{array} \leftrightarrow 
\begin{array}{c} \scalebox{0.75}{ \input{diagrams/R3aRHScase2v2.pdf_tex}} \end{array}
 \]
\caption{Case 2} 
\label{fig:R3case2}
\end{figure}

In case 2, we refer to figure \ref{fig:R3case2}. 
From the left hand side, we obtain:
\begin{align} \nonumber
s&= \mse(a), & x &= a \mathcal{E} (r s^{-1}), \\ \nonumber
t&= \mso(b) ,& z& = b \mathcal{O} ( s t^{-1}), \\ \nonumber
y&= \msob (t), & r &= \mcob (b^{-1} c ) t .
\end{align}

From the right hand side, we obtain
\begin{align} \nonumber
p&=\msob (b), & x  &= \mcob (p^{-1} q) b,  \\ \nonumber
z&=\mse(m), & q&=m \mathcal{E} (c z^{-1}), \\ \nonumber
y&=\mso(p), & m&=p \mathcal{O} (a y^{-1}).
\end{align}

Reducing, we obtain
\begin{align*} 
x&= a \mathcal{E} ( \mathcal{O}^{-1} (b^{-1} c) \mso (b) \mse(a^{-1})) = ab \mathcal{O}^{-1} ( \mathcal{E} (c \mse(\mathcal{O} (b a^{-1}) \msob (b^{-1})) b ,\\ 
y&=b, \\ 
z&= b \mathcal{O}(\mse(a) \mso(b^{-1})) = \mse (\msob (b) \mathcal{O} (ab^{-1})) .
\end{align*}

\begin{figure} 
\[ \begin{array}{c} \scalebox{0.5}{ 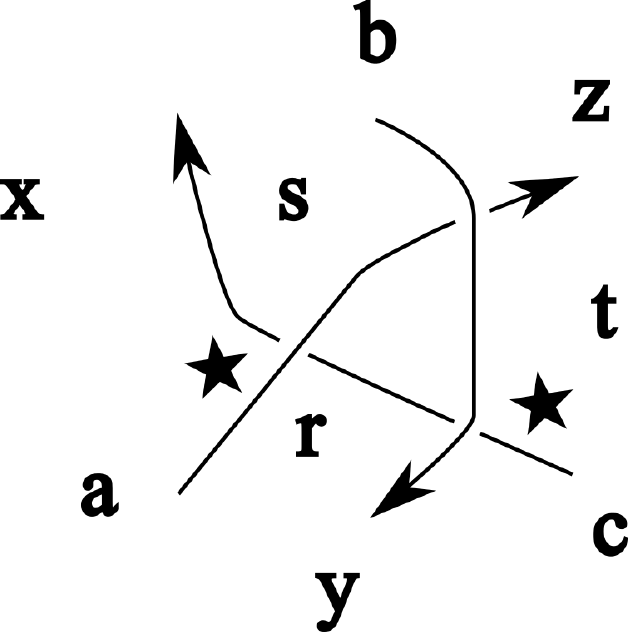} \end{array} \leftrightarrow 
\begin{array}{c} \scalebox{0.5}{ \input{diagrams/R3aRHScase3v2.pdf_tex}} \end{array}
 \]
\caption{Case 3} 
\label{fig:R3case3}
\end{figure}

The third case is in figure \ref{fig:R3case3}. From the left hand side,
\begin{align} \nonumber
s&=\mso(z), & x&= a \mathcal{O} (rs^{-1}),  \\ \nonumber
t&= \mse(b), & z &= b \mathcal{E} (st^{-1}), \\ \nonumber
y&= \msob (t), & r &= \mcob (y^{-1} c) t .
\end{align}

 From the right hand side
 \begin{align} \nonumber
 p&=\msob (b,) & x &= \mcob (p^{-1} q ) b, \\ \nonumber
 z&=\mso(m), & q&= m \mathcal{O}(cz^{-1}),  \\ \nonumber
 y&=\mse(p), & m&=p \mathcal{E}(a y^{-1}).
 \end{align}

This reduces to 
\begin{align} \label{eqn:c3x} 
x &= a b^{-1} c \mathcal{O} ( \mse(b) \mso(a^{-1})) =ab^{-1} \mceb \left[ \mathcal{O} ( c  \mso( \mathcal{E} (ba^{-1})) b^{-1}) ) \right] b,  \\  
y&=\msob ( \mse(b))= \mse( \msob (b)), \\  \label{eqn:c3z} 
z&= b \mathcal{E} (\mso(a) \mse(b^{-1})= b \mso (\mathcal{E} (ab^{-1})).
\end{align}
These relations must be satisfied to construct a parity knot group. 

We simplify some of the relations and determine that it is necessary for the  automorphisms to satisfy a set of commutator relations.

From equation \ref{eqn:c1x}, we  observe that
\begin{align} \nonumber
a \mathcal{O} [ \mceb ( \mseb ( \mso (b^{-1}))    c)    \mso(ba^{-1})]  &= \mceb \left[ \mathcal{O} [ a \mso( \mseb (b^{-1})   ) c  \mso \left[    \mathcal{O} ( \mso ( \mseb (b)) a^{-1} ) \mseb (b^{-1} )   \right]  \right] b.
\end{align}
Letting $a=b=1$, we obtain  $ \mathcal{O} ( \mceb (c))= \mceb ( \mathcal{O} (c)) $. Hence $ \mathcal{E} $ and $ \mathcal{O}$ commute. 

From equation \ref{eqn:c1y}, we observe that $\mseb ( \mso (b))= \mso(  \mseb (b))$, so $ \mse$ and $\mso$ commute. 

Finally, from equation \ref{eqn:c1z}
\begin{align} \nonumber
 b \mathcal{O} ( \mso (a) \mso (b^{-1})) &= \mso \left(  \mseb (b) \mathcal{O} ( a \mso( \mseb (b)) ) \right) . 
 \end{align}
Letting $b=1$, the equation reduces to  $ \mathcal{O} ( \mso (a)) = \mso ( \mathcal{O} (a))$ so that $ \mathcal{O}$ and $ \mso $ commute. 

In a group structure which preserves the parity and the crossing sign information, the automorphisms must have the following four commutator relations: $[ \mse, \mce ], [\mso, \mco], [\mco, \mce],$ and $[\mso, \mse]$.

\section{Restricting our maps} 

We prove that both parity and crossing information can not be detected within the specialized structures.

\subsection{B(K) parity groups}
In this section, we analyze the subsets from the group structure and let $\mathcal{E} = \mathcal{O} = I$. 

We consider  equation \ref{eqn:c3z}.  The simplified form of this equation
\begin{align} \label{eqn:biz}
b \mso(a) \mse(b^{-1})&=\mse( \msob (b)) .
\end{align}

From the simplified equation \ref{eqn:biz}, we  observe that $b \mso(a) \mse(b^{-1}) = b \mso(ab^{-1})$. As a result, $\mso(b) = \mse(b)$ for all $b$. Odd and even crossings are not distinguished.
\subsection{S(K) parity groups}

Here, $\mathcal{E}=\mse$ and $\mathcal{O}=\mso$.
Note that $\mathcal{E}$ and $ \mathcal{O}$ commute since in the previous section, we concluded that $\mso$ and $\mse$ commuted. 

We focus on equation \ref{eqn:c3z} and simplify: 
\begin{align} \label{eqn:siz}
b \mathcal{E}(\mathcal{O}(a)) \mathcal{E}^2 (b^{-1})  &= b \mathcal{O}(\mathcal{E}(ab^{-1})) .
\end{align} 
Since $\mathcal{O}$ and $\mathcal{E}$ commute, observe that $\mathcal{E}(b) = \mathcal{O}(b)$. 
This group type does not distinguish crossings by parity. 

\subsection{I(K) parity groups} 
In this parity group, we let
$\mathcal{O}=\msob $ and $ \mathcal{E} = \mseb $. 

We focus on equation \ref{eqn:c3z} and simplify:
\begin{align} \label{eqn:iiz}
 b \mathcal{E}(\mcob (a)) b^{-1}  &= b \mcob (\mathcal{E}(ab^{-1}))  .
\end{align}
Letting $a=1$, we obtain $b \mcob ( \mathcal{E} (b^{-1})) =1$ for all $b$.  Hence, $\mathcal{E} = \mathcal{O}$.  The odd and even crossings are not distinguished.

\subsection{Q(K) parity groups} 

Here $\mso=I=\mse$. 
We focus on equation \ref{eqn:c3x}. The expressions reduce to
\begin{align} \label{eqn:qix}
ab^{-1} c \mathcal{O}(ba^{-1})  &= ab^{-1} \mceb ( \mathcal{O} (c  \mathcal{E} (b a^{-1}) b^{-1})) b.
\end{align}
Letting $a=b=1$, we observe that $c = \mceb ( \mathcal{O} (c))$. Again, $ \mathcal{E} = \mathcal{O}$, so crossings are not distinguished. 

\begin{theorem}For a virtual knot $K$, the groups $B(K), I(K), S(K),$ and $Q(K)$ cannot detect both parity and crossing information. \end{theorem}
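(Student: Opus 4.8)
The plan is to prove the theorem case by case, exactly along the lines of the four subsections that precede the statement, treating each of $B(K)$, $Q(K)$, $S(K)$, and $I(K)$ as a specialization of the general parity relations in equations \ref{eqn:c3x}--\ref{eqn:c3z}. The underlying strategy is uniform: in each specialized family, one of the two automorphisms $\phi,\theta$ is forced to equal the identity (or one is the inverse of the other, or $\phi=\theta$), so the Reidemeister III consistency equations collapse. We then substitute into one of the case-3 equations, set the free generators $a$, $b$, $c$ to the identity where convenient, and read off a forced identity between $\mathsf{o}$ (the odd value of the surviving automorphism) and $\mathsf{e}$ (its even value). Since the odd and even crossings are controlled by the \emph{same} underlying automorphism in these subfamilies, forcing $\mathsf{o}=\mathsf{e}$ (or $\mathcal{O}=\mathcal{E}$) means the relations on odd crossings are literally identical to those on even crossings, so no parity data survives; and since in these subfamilies there is only one automorphism carrying sign information, the two conclusions together say the group cannot see both.

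First I would handle $B(K)$: set $\mathcal{E}=\mathcal{O}=I$ and use the simplified equation \ref{eqn:biz}, which gives $b\,\mathsf{o}(a)\,\mathsf{e}(b^{-1}) = \mathsf{e}(\bar{\mathsf{o}}(b))$; comparing with the elementary identity $b\,\mathsf{o}(a)\,\mathsf{e}(b^{-1}) = b\,\mathsf{o}(ab^{-1})$ forces $\mathsf{o}=\mathsf{e}$. Next, $S(K)$: here $\mathcal{E}=\mathsf{e}$, $\mathcal{O}=\mathsf{o}$, and $\mathsf{o},\mathsf{e}$ commute by the conclusion of the previous section, so equation \ref{eqn:c3z} reduces to \ref{eqn:siz}, and commutativity yields $\mathcal{E}(b)=\mathcal{O}(b)$. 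Then $I(K)$: set $\mathcal{O}=\bar{\mathsf{o}}$, $\mathcal{E}=\bar{\mathsf{e}}$, reduce \ref{eqn:c3z} to \ref{eqn:iiz}, and put $a=1$ to get $b\,\bar{\mathcal{O}}(\mathcal{E}(b^{-1}))=1$ for all $b$, hence $\mathcal{E}=\mathcal{O}$. Finally $Q(K)$: set $\mathsf{o}=I=\mathsf{e}$, reduce \ref{eqn:c3x} to \ref{eqn:qix}, and put $a=b=1$ to obtain $c=\bar{\mathcal{E}}(\mathcal{O}(c))$, i.e.\ $\mathcal{E}=\mathcal{O}$. In each case I would add one sentence observing that, because the parity and sign of a crossing are jointly encoded only in the single pair of automorphism-values that has just been forced to coincide, the resulting group assigns identical Wirtinger-type relations to an odd positive crossing and an even positive crossing (and likewise for negative), so it detects neither parity alongside sign.

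The conceptual content beyond these substitutions is essentially the remark already made in the paper: a group structure that genuinely records both parity and sign needs the four independent commutator relations $[\mathsf{e},\mathcal{E}]$, $[\mathsf{o},\mathcal{O}]$, $[\mathcal{O},\mathcal{E}]$, $[\mathsf{o},\mathsf{e}]$ together with \emph{four genuinely distinct} automorphism-values $\mathsf{e},\mathsf{o},\mathcal{E},\mathcal{O}$ (one for each of even/odd $\times$ $\phi$/$\theta$), whereas each of $B(K),Q(K),S(K),I(K)$ has collapsed one of the two $\phi/\theta$ axes and the forced relation then collapses the remaining even/odd axis. So the proof is really a bookkeeping argument that the defining constraint of each subfamily, when combined with Reidemeister III invariance, is incompatible with keeping the even and odd instances distinct.

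The main obstacle I anticipate is not any single computation but making sure the case analysis is genuinely exhaustive and that the chosen specializations of $a,b,c$ are legitimate — i.e.\ that setting generators equal to $1$ is a valid homomorphic consequence (it is, since we may map $F_{2n+j}$ onto any quotient identifying those generators with the identity, and an honest invariant must survive) rather than an illegitimate manipulation of a presentation. I would also want to double-check that in the $S(K)$ case the cited commutation of $\mathsf{o}$ and $\mathsf{e}$ from the previous section is actually available as a hypothesis for \emph{every} member of the family $S(K)$ — it is, because that commutation was derived solely from Reidemeister III invariance, which holds for all groups in the structure of Theorem \ref{novirts}. Modulo these care points, each bullet is a two-line argument, and the theorem follows by assembling the four cases.

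\begin{proof}
We treat each family separately; in every case the defining restriction on the automorphisms $\phi,\theta$ collapses the Reidemeister III consistency equations derived above to a relation forcing the even and odd specializations to agree, after which the single remaining sign-bearing automorphism cannot simultaneously record parity.

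\textbf{$B(K)$.} Setting $\mathcal{E}=\mathcal{O}=I$, equation \ref{eqn:c3z} becomes equation \ref{eqn:biz}, namely $b\,\mso(a)\,\mse(b^{-1}) = \mse(\msob(b))$. Since $b\,\mso(a)\,\mse(b^{-1}) = b\,\mso(ab^{-1})$, we get $\mso(b) = \mse(b)$ for all $b$, so $\mso=\mse$.

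\textbf{$S(K)$.} Here $\mathcal{E}=\mse$ and $\mathcal{O}=\mso$. By the conclusion of Section \ref{paritygroups}, Reidemeister III invariance forces $\mso$ and $\mse$ to commute, hence $\mathcal{O}$ and $\mathcal{E}$ commute. Equation \ref{eqn:c3z} reduces to equation \ref{eqn:siz}, $b\,\mathcal{E}(\mathcal{O}(a))\,\mathcal{E}^2(b^{-1}) = b\,\mathcal{O}(\mathcal{E}(ab^{-1}))$. Using commutativity to rewrite the left side as $b\,\mathcal{O}(\mathcal{E}(a))\,\mathcal{E}(\mathcal{E}(b^{-1}))$ and comparing with the right side gives $\mathcal{E}(b)=\mathcal{O}(b)$ for all $b$, so $\mathcal{E}=\mathcal{O}$ and consequently $\mse=\mso$.

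\textbf{$I(K)$.} Here $\mathcal{O}=\mcob$ and $\mathcal{E}=\mceb$. Equation \ref{eqn:c3z} reduces to equation \ref{eqn:iiz}, $b\,\mathcal{E}(\mcob(a))\,b^{-1} = b\,\mcob(\mathcal{E}(ab^{-1}))$. Setting $a=1$ yields $b\,\mcob(\mathcal{E}(b^{-1})) = 1$ for all $b$, so $\mcob\circ\mathcal{E} = I$ on all generators, i.e.\ $\mathcal{E}=\mathcal{O}$, and hence $\mseb=\msob$.

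\textbf{$Q(K)$.} Here $\mso=I=\mse$. Equation \ref{eqn:c3x} reduces to equation \ref{eqn:qix}, $ab^{-1}c\,\mathcal{O}(ba^{-1}) = ab^{-1}\mceb(\mathcal{O}(c\,\mathcal{E}(ba^{-1})b^{-1}))b$. Setting $a=b=1$ yields $c = \mceb(\mathcal{O}(c))$ for all $c$, so $\mathcal{E}=\mathcal{O}$.

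In each of the four families the defining restriction leaves only one of the pairs $\{\mse,\mso\}$, $\{\mathcal{E},\mathcal{O}\}$ free to carry information (the other being $I$, or determined by the first), and we have just shown that even this pair is forced to have equal even and odd values. Thus the Wirtinger-type relation attached to a positive even crossing coincides with that attached to a positive odd crossing, and likewise for negative crossings; the presentation therefore cannot encode the parity of a crossing together with its sign. Hence none of $B(K)$, $I(K)$, $S(K)$, $Q(K)$ detects both parity and crossing information.
\end{proof}
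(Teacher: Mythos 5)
Your proof is correct and follows essentially the same route as the paper: the paper's own proof simply cites equations \ref{eqn:biz}, \ref{eqn:siz}, \ref{eqn:iiz}, and \ref{eqn:qix}, i.e.\ exactly the four specialization arguments you carry out case by case. You have merely written out in full the substitutions and the forced identities $\mse=\mso$ or $\mathcal{E}=\mathcal{O}$ that the paper leaves to the preceding subsections, so no gap or divergence to report.
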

\begin{proof}See equations \ref{eqn:biz}, \ref{eqn:iiz}, \ref{eqn:siz}, and \ref{eqn:qix}. \end{proof}

\bibliographystyle{plain}

\begin{thebibliography}{10}

\bibitem{bodengroup}
Hans~U. Boden, Robin Gaudreau, Eric Harper, Andrew~J. Nicas, and Lindsay White.
\newblock Virtual knot groups and almost classical knots.
\newblock {\em Fund. Math.}, 238(2):101--142, 2017.

\bibitem{cranshenrichnelson}
Alissa~S. Crans, Allison Henrich, and Sam Nelson.
\newblock Polynomial knot and link invariants from the virtual biquandle.
\newblock {\em J. Knot Theory Ramifications}, 22(4):134004, 15, 2013.

\bibitem{dyekaestner}
H.~Dye and A.~Kaestner.
\newblock Parity virtual alexander polynomial.
\newblock {\em Submitted}.

\bibitem{dkencyc}
H.~Dye and A.~Kaestner.
\newblock Chapter 32. virtual knots and parity.
\newblock In {\em Encyclopedia of Knot Theory}. Chapman and Hall/CRC, 2020.

\bibitem{fennrourke}
Roger Fenn, Rich\'{a}rd Rim\'{a}nyi, and Colin Rourke.
\newblock The braid-permutation group.
\newblock {\em Topology}, 36(1):123--135, 1997.

\bibitem{ManturovIlyutkoNikonov}
Denis~Petrovich Ilyutko, Vassily~Olegovich Manturov, and Igor~Mikhailovich
  Nikonov.
\newblock Virtual knot invariants arising from parities.
\newblock In {\em Knots in {P}oland. {III}. {P}art 1}, volume 100 of {\em
  Banach Center Publ.}, pages 99--130. Polish Acad. Sci. Inst. Math., Warsaw,
  2014.

\bibitem{joyce}
David Joyce.
\newblock A classifying invariant of knots, the knot quandle.
\newblock {\em J. Pure Appl. Algebra}, 23(1):37--65, 1982.

\bibitem{introvkt}
Louis~H. Kauffman.
\newblock Introduction to virtual knot theory.
\newblock {\em J. Knot Theory Ramifications}, 21(13):1240007, 37, 2012.

\bibitem{polyak}
Michael Polyak.
\newblock Minimal generating sets of {R}eidemeister moves.
\newblock {\em Quantum Topol.}, 1(4):399--411, 2010.

\bibitem{silverwilliamsgroup}
Daniel~S. Silver and Susan~G. Williams.
\newblock Virtual knot groups.
\newblock In {\em Knots in {H}ellas '98 ({D}elphi)}, volume~24 of {\em Ser.
  Knots Everything}, pages 440--451. World Sci. Publ., River Edge, NJ, 2000.

\end{thebibliography}

\end{document}